\documentclass[10pt]{amsart}%
\usepackage{graphicx}
\usepackage{amscd, color}
\usepackage{amsmath}
\usepackage{amsfonts}
\usepackage{amssymb}%
\setcounter{MaxMatrixCols}{30}
\providecommand{\U}[1]{\protect\rule{.1in}{.1in}}
\providecommand{\U}[1]{\protect\rule{.1in}{.1in}}
\providecommand{\U}[1]{\protect\rule{.1in}{.1in}} \textwidth 16.3cm
\textheight 21.5cm \headheight 0.3cm \headsep 0.8cm \topmargin
0.15cm \oddsidemargin 0.2cm \evensidemargin 0.2cm
\theoremstyle{plain}

\newtheorem{theorem}{Theorem}[section]

\numberwithin{equation}{section}

\begin{document}
\title{A general Extrapolation Theorem for absolutely summing operators}

\author{D. Pellegrino \and J. Santos \and J. B. Seoane-Sep\'ulveda}

\address{Departamento de Matem\'{a}tica, \newline\indent Universidade Federal da Para\'{\i}ba, \newline\indent 58.051-900 - Jo\~{a}o Pessoa, Brazil.}\email{pellegrino@pq.cnpq.br and dmpellegrino@gmail.com}

\address{Departamento de Matem\'{a}tica, \newline\indent Universidade Federal de Sergipe, \newline\indent 49.500-000 -
Itabaiana, Brazil.}
\email{joedsonsr@yahoo.com.br}

\address{Departamento de An\'{a}lisis Matem\'{a}tico,\newline\indent Facultad de Ciencias Matem\'{a}ticas, \newline\indent Plaza de Ciencias 3, \newline\indent Universidad Complutense de Madrid,\newline\indent Madrid, 28040, Spain.}
\email{jseoane@mat.ucm.es}

\thanks{D. Pellegrino was supported by CNPq Grant 301237/2009-3. J. B. Seoane-Sep\'{u}lveda was supported by the Spanish Ministry of Science and Innovation, grant MTM2009-07848.}

\begin{abstract}
In this note we prove a general version of the Extrapolation Theorem, extending the classical linear extrapolation theorem due to B. Maurey. Our result shows, in particular, that the operators involved do not need to be linear.
\end{abstract}
\maketitle

\section{Introduction and background}

The notion of absolutely $(p;q)$-summing linear operators is due to A. Pietsch
\cite{stu} and B. Mitiagin and A. Pe\l czy\'{n}ski \cite{MPel}, inspired by
previous works of A. Grothendieck. The nonlinear theory of absolutely summing
operators was initiated by A. Pietsch and a complete nonlinear approach was
introduced by M.C. Matos \cite{Nach}.

Let $X,Y$ be Banach spaces over a fixed scalar field $\mathbb{K}=\mathbb{R}$
or $\mathbb{C}$; for $1\leq p<\infty$, denote by $\ell_{p,w}\left(  X\right)
$ the space of all sequences $\left(  x_{j}\right)  _{j=1}^{\infty}$ in $X$
such that $\left(  \varphi\left(  x_{j}\right)  \right)  _{j=1}^{\infty}%
\in\ell_{p}$ for every $\varphi$ in the topological dual of $X$ (represented
by $X^{\ast}$). Let $\mathcal{L}$ be the class of all continuous linear
operators between Banach spaces over $\mathbb{K}$, i.e., $\mathcal{L}%
=\bigcup_{X,Y}\mathcal{L}\left(  X;Y\right)  $, where $X$,$Y$ run over all
Banach spaces over $\mathbb{K}$.

A continuous linear operator $u:X\rightarrow Y$ is absolutely $p$-summing (we
write $u\in$ $\Pi_{p}(X;Y)$) if $\left(  u(x_{j})\right)  _{j=1}^{\infty}%
\in\ell_{p}\left(  Y\right)  $ whenever $\left(  x_{j}\right)  _{j=1}^{\infty
}\in\ell_{p,w}\left(  X\right)  .$ For details we refer to the classical
monograph \cite{djt} and to \cite{PellZ, dies} for more recent results.

An important result in the theory of absolutely summing linear operators is
the \textit{Extrapolation Theorem} due to B. Maurey \cite{Maurey}:

\begin{theorem}
[\textbf{Extrapolation Theorem}]\label{vv}Let $1<r<p<\infty$ and let $X$ be a
Banach space. If
\[
\Pi_{p}(X;\ell_{p})=\Pi_{r}(X;\ell_{p}),
\]
then, for any Banach space $Y,$
\[
\Pi_{p}(X;Y)=\Pi_{1}(X;Y).
\]

\end{theorem}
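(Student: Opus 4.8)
The plan is to reduce the statement to Pietsch's Domination Theorem, applied repeatedly. Since $1\le r<p$, the inclusions $\Pi_{1}(X;Y)\subseteq\Pi_{p}(X;Y)$ and $\Pi_{r}(X;Y)\subseteq\Pi_{p}(X;Y)$ (with $\pi_{p}\le\pi_{r}$) are automatic, so the whole content is $\Pi_{p}(X;Y)\subseteq\Pi_{1}(X;Y)$. I would first make the hypothesis quantitative: the formal identity $(\Pi_{r}(X;\ell_{p}),\pi_{r})\hookrightarrow(\Pi_{p}(X;\ell_{p}),\pi_{p})$ is contractive, it is bijective by assumption, and both spaces are complete; hence the open mapping theorem yields a constant $C=C(X,p,r)>0$ such that
\[
\pi_{r}(v)\le C\,\pi_{p}(v)\qquad\text{for all }v\in\mathcal{L}(X;\ell_{p}).
\]

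The second step removes the restriction to the target $\ell_{p}$, proving $\pi_{r}(u)\le C\,\pi_{p}(u)$ for every Banach space $Y$ and every $u\in\mathcal{L}(X;Y)$. Given $u\in\Pi_{p}(X;Y)$, Pietsch's theorem provides a regular probability measure $\mu$ on $(B_{X^{\ast}},w^{\ast})$ with $\|u(x)\|\le\pi_{p}(u)\big(\int|\varphi(x)|^{p}\,d\mu(\varphi)\big)^{1/p}$, so $u=\overline{u}\circ j_{\mu}$, where $j_{\mu}\colon X\to L_{p}(\mu)$, $j_{\mu}(x)=(\varphi\mapsto\varphi(x))$, satisfies $\pi_{p}(j_{\mu})\le 1$ and $\|\overline{u}\|\le\pi_{p}(u)$. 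For a finite sub-$\sigma$-algebra $\mathcal{F}$ the conditional expectation $E_{\mathcal{F}}$ is a norm-one projection onto a subspace of $L_{p}(\mu)$ isometric to some $\ell_{p}^{d}\subseteq\ell_{p}$, so the inequality of the first step gives $\pi_{r}(E_{\mathcal{F}}j_{\mu})\le C\,\pi_{p}(E_{\mathcal{F}}j_{\mu})\le C$; refining $\mathcal{F}$ so as to approximate an arbitrary finite family $j_{\mu}(x_{1}),\dots,j_{\mu}(x_{N})$ in $L_{p}(\mu)$ and passing to the limit yields $\pi_{r}(j_{\mu})\le C$. Applying Pietsch's theorem once more, now at the exponent $r$, gives a probability measure $\nu$ with $\|j_{\mu}(x)\|_{L_{p}(\mu)}\le C\big(\int|\varphi(x)|^{r}\,d\nu(\varphi)\big)^{1/r}$, whence $\|u(x)\|\le C\,\pi_{p}(u)\big(\int|\varphi(x)|^{r}\,d\nu(\varphi)\big)^{1/r}$ and therefore $u\in\Pi_{r}(X;Y)$ with $\pi_{r}(u)\le C\,\pi_{p}(u)$. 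Thus $\Pi_{p}(X;\cdot)=\Pi_{r}(X;\cdot)$ with a uniform constant.

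The final step is the descent of the exponent from $r$ down to $1$, and this is the step I expect to be the real obstacle. The natural attempt is to iterate: assuming $\Pi_{p}(X;\cdot)=\Pi_{q}(X;\cdot)$ globally (true initially with $q=r$), start again from a $p$-summing $u$ and its Pietsch factorization $u=\overline{u}\circ j_{\mu}$ through $L_{p}(\mu)$, and combine the available dominations — by Hölder's inequality and interpolation between the $L_{p}(\mu)$- and $L_{q}$-estimates — into a Pietsch-type bound for $u$ with a strictly smaller exponent $q'<q$; after finitely many passes $q'\le 1$, and then $\Pi_{q'}(X;\cdot)\subseteq\Pi_{1}(X;\cdot)$ closes the argument. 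The difficulty is that naively interpolating two Pietsch dominations only produces a summing exponent lying strictly between the two one started from, so the iteration must be set up much more delicately; an alternative that may be cleaner is to run a Ky Fan/Nikishin-type separation argument directly for a degree-$1$ Pietsch measure for $u$, using the inequality $\pi_{r}(\cdot)\le C\,\pi_{p}(\cdot)$ precisely to rule out the only obstruction to its existence (a weakly $1$-summable sequence in $X$ that $u$ fails to make absolutely summable).
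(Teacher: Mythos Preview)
Your first two steps are sound and match the paper's route: open mapping to get a uniform constant $C$, then the key observation that for \emph{every} probability $\mu$ on $B_{X^{\ast}}$ the canonical map $j_{\mu}\colon X\to L_{p}(\mu)$ satisfies $\pi_{r}(j_{\mu})\le C$, obtained by approximating through finite-dimensional copies of $\ell_{p}^{d}$ inside $L_{p}(\mu)$ (you use conditional expectations, the paper uses the $\mathcal{L}_{p,\lambda}$-space structure; either works). Pietsch at level $r$ then gives, for each $\mu$, a measure $\hat\mu$ with
\[
\Bigl(\int_{B_{X^{\ast}}}|\varphi(x)|^{p}\,d\mu\Bigr)^{1/p}\le C\Bigl(\int_{B_{X^{\ast}}}|\varphi(x)|^{r}\,d\hat\mu\Bigr)^{1/r}\qquad(x\in X).
\]

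Your third step, however, is a genuine gap, and your own diagnosis is accurate: iterating the passage ``from $p$-summing to $r$-summing'' at the level of summing exponents only produces intermediate exponents and never reaches $1$; a Ky~Fan separation does not bypass this. The missing idea --- Maurey's original trick, reproduced in the paper --- is to iterate \emph{at the level of measures} and then average. Start from the Pietsch measure $\mu_{0}$ for $u$, set $\mu_{n+1}:=\widehat{\mu_{n}}$ and $\bar\mu:=\sum_{n\ge 0}2^{-n-1}\mu_{n}$. Writing $g(\varphi)=|\varphi(x)|$ and choosing $\theta\in(0,1)$ with $\tfrac{1}{r}=\theta+\tfrac{1-\theta}{p}$, H\"older gives $\|g\|_{L_{r}(\mu_{n+1})}\le\|g\|_{L_{1}(\mu_{n+1})}^{\theta}\|g\|_{L_{p}(\mu_{n+1})}^{1-\theta}$. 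Feeding this into
\[
\sum_{n\ge 0}2^{-n-1}\|g\|_{L_{p}(\mu_{n})}\le C\sum_{n\ge 0}2^{-n-1}\|g\|_{L_{r}(\mu_{n+1})}
\]
and applying H\"older to the resulting sum, the factor $\bigl(\sum_{n}2^{-n-1}\|g\|_{L_{p}(\mu_{n})}\bigr)^{1-\theta}$ reappears on the right and can be absorbed into the left, leaving
\[
\tfrac{1}{2}\|g\|_{L_{p}(\mu_{0})}\le\sum_{n\ge 0}2^{-n-1}\|g\|_{L_{p}(\mu_{n})}\le (2C)^{1/\theta}\sum_{n\ge 0}2^{-n-1}\|g\|_{L_{1}(\mu_{n})}=(2C)^{1/\theta}\|g\|_{L_{1}(\bar\mu)}.
\]
This is a Pietsch domination for $u$ at level $1$ with the single measure $\bar\mu$, giving $u\in\Pi_{1}(X;Y)$. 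The self-improving summation is the step your outline lacks.
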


In the present paper we show that this Extrapolation Theorem holds in a much
more general form, where the linearity of the operators is not needed. For
example, as a very particular case of our main result we generalize the
Extrapolation Theorem in the following way:

Let $\mathbb{K}$ be a fixed scalar field $\mathbb{R}$ or $\mathbb{C}$, $1\leq
p<\infty$ and $\mathcal{F}$ a non-void family of maps, $\mathcal{F}%
\subset\bigcup_{X,Y}X^{Y}$, where $X$ and $Y$ run over all Banach spaces over
$\mathbb{K}$. We say that $f\in\mathcal{F}$ is absolutely $p$-summing if there
is a $C\geq0$ so that
\begin{equation}
\left(  {\textstyle\sum\limits_{j=1}^{m}}\left\Vert f(x_{j})\right\Vert
^{p}\right)  ^{1/p}\leq C\left(  \sup_{\varphi\in B_{E^{\ast}}}{\textstyle\sum
\limits_{j=1}^{m}}\left\vert \varphi\left(  x_{j}\right)  \right\vert
^{p}\right)  ^{1/p} \label{inf}%
\end{equation}
for every positive integer $m$ and every $x_{1},...,x_{m}\in X$. Let
\[
\Pi_{\mathcal{F},p}(X;Y)=\left\{  f\in\mathcal{F}:\text{ }f:X\rightarrow
Y\text{ is absolutely }p\text{-summing}\right\}  .
\]
Note that $\Pi_{\mathcal{F},p}(X;Y)$ is a vector space and the infimum of the
$C$'s satisfying (\ref{inf}) is a norm for $\Pi_{\mathcal{F},p}(X;Y)$ denoted
by $\pi_{\mathcal{F},p}.$ If $\left(  \Pi_{\mathcal{F},p}(X;Y),\pi
_{\mathcal{F},p}\right)  $ is complete and $\mathcal{F}$ is so that $T\circ
f\in\mathcal{F}$ whenever $T\in\mathcal{L}(Y;Z)$ and $f\in\Pi_{\mathcal{F}%
,p}(X;Y)$, then, as a consequence of our main result, we have the following theorem:

\begin{theorem}
[Nonlinear Extrapolation Theorem]\label{intr}Let $1<r<p<\infty$ and let $X$ be
a Banach space. If
\[
\Pi_{\mathcal{F},p}(X;\ell_{p})=\Pi_{\mathcal{F},r}(X;\ell_{p}),
\]
then, for any Banach space $Y,$
\[
\Pi_{\mathcal{F},p}(X;Y)=\Pi_{\mathcal{F},1}(X;Y).
\]
In particular, when $\mathcal{F}=\mathcal{L}$ we recover Maurey's
Extrapolation Theorem.
\end{theorem}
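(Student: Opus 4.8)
This statement is a special case of the general Extrapolation Theorem proved below, so the plan is really to describe, in the concrete setting above, the argument behind that theorem. It adapts Maurey's proof of \thmref{vv}; the point is that Maurey's argument composes an absolutely summing operator only \emph{on the left} with bounded linear operators and uses only the defining inequality \eqref{inf} of its members, never their additivity or homogeneity --- so the two structural assumptions available here (completeness of $(\Pi_{\mathcal{F},p}(X;Y),\pi_{\mathcal{F},p})$, and $T\circ f\in\mathcal{F}$ whenever $T\in\mathcal{L}(Y;Z)$ and $f\in\Pi_{\mathcal{F},p}(X;Y)$) are exactly what is needed. To start, the general Pietsch Domination Theorem holds for $\mathcal{F}$: $f\in\Pi_{\mathcal{F},q}(X;Y)$ with $\pi_{\mathcal{F},q}(f)\le C$ if and only if there is a regular Borel probability measure $\mu$ on $(B_{X^{*}},w^{*})$ with $\|f(x)\|\le C\big(\int_{B_{X^{*}}}|\varphi(x)|^{q}\,d\mu(\varphi)\big)^{1/q}$ for all $x\in X$ (its proof is a Hahn--Banach separation argument and uses no linearity). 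Combining this with Jensen's inequality gives the norm-nonincreasing inclusions $\Pi_{\mathcal{F},1}(X;Y)\subseteq\Pi_{\mathcal{F},r}(X;Y)\subseteq\Pi_{\mathcal{F},p}(X;Y)$ for every $Y$, so the whole content is the reverse inclusion $\Pi_{\mathcal{F},p}(X;Y)\subseteq\Pi_{\mathcal{F},1}(X;Y)$ under the hypothesis.

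First I would make the hypothesis quantitative. Since $\Pi_{\mathcal{F},r}(X;\ell_{p})\subseteq\Pi_{\mathcal{F},p}(X;\ell_{p})$ is automatic, the hypothesis asserts these coincide; the first space is complete and the second inherits completeness (a $\pi_{\mathcal{F},r}$-Cauchy sequence is $\pi_{\mathcal{F},p}$-Cauchy by the bounded inclusion, hence $\pi_{\mathcal{F},p}$-converges, and a Fatou argument applied to \eqref{inf} upgrades this to $\pi_{\mathcal{F},r}$-convergence to the same limit), so the Closed Graph Theorem produces $\kappa\ge1$ with $\pi_{\mathcal{F},r}(g)\le\kappa\,\pi_{\mathcal{F},p}(g)$ for all $g\in\Pi_{\mathcal{F},p}(X;\ell_{p})$. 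The second, and crucial, ingredient is a ``diagonalization'' that transfers this estimate to an arbitrary target while sharpening exponents: given $f\in\Pi_{\mathcal{F},p}(X;Y)$, vectors $x_{1},\dots,x_{m}\in X$, norming functionals $\psi_{j}\in B_{Y^{*}}$ for $f(x_{j})$, and a weight $\lambda=(\lambda_{j})_{j=1}^{m}\ge0$, form the linear map $T_{\lambda}\colon Y\to\ell_{p}^{m}$, $T_{\lambda}(y)=(\lambda_{j}\psi_{j}(y))_{j}$. Then $T_{\lambda}\circ f\in\mathcal{F}$, it takes values in $\ell_{p}^{m}\subseteq\ell_{p}$ with $\pi_{\mathcal{F},p}(T_{\lambda}\circ f)\le\|\lambda\|_{p}\,\pi_{\mathcal{F},p}(f)$, hence $\pi_{\mathcal{F},r}(T_{\lambda}\circ f)\le\kappa\|\lambda\|_{p}\,\pi_{\mathcal{F},p}(f)$ by the previous step, and since $\|(T_{\lambda}\circ f)(x_{j})\|\ge\lambda_{j}\|f(x_{j})\|$ this yields a family of summability inequalities for $f$ indexed by $\lambda$. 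Optimizing over $\lambda$ with H\"older's inequality and then iterating the construction --- feeding the improved inequalities back through the Pietsch domination machinery, which is the role of Maurey's inductive scheme --- pushes the right-hand exponent down to $1$, with a final closedness/limiting step to reach the endpoint; this gives $f\in\Pi_{\mathcal{F},1}(X;Y)$ for every $Y$. Taking $\mathcal{F}=\mathcal{L}$, for which both structural hypotheses are classical, recovers \thmref{vv}.

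The step I expect to be the real obstacle is this last, analytic core. A single pass of the diagonalization only trades the exponent $p$ for a \emph{larger} one on the left of \eqref{inf} while improving the right-hand exponent, so on its own it does not reach $1$: one must arrange Maurey's iteration so that the relevant exponents converge monotonically to the endpoint, and then justify passing to the non-attained value $1$ by a completeness/closed-ball argument --- all while keeping the auxiliary maps produced inside $\mathcal{F}$, which is guaranteed solely by the stability hypothesis $T\circ f\in\mathcal{F}$. The remaining ingredients (Pietsch domination for $\mathcal{F}$, the Closed Graph reduction, Jensen's and H\"older's inequalities, and the norm bounds on $T_{\lambda}\circ f$) are soft and entirely linearity-free, so once the core iteration is in place the theorem --- and, via $\mathcal{F}=\mathcal{L}$, Maurey's \thmref{vv} --- follows.
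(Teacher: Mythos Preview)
Your soft ingredients are correct and coincide with the paper's: the general Pietsch Domination Theorem gives the inclusions $\Pi_{\mathcal{F},1}\subseteq\Pi_{\mathcal{F},r}\subseteq\Pi_{\mathcal{F},p}$, and an Open Mapping/Closed Graph argument converts the hypothesis into a uniform constant $\kappa$ with $\pi_{\mathcal{F},r}(g)\le\kappa\,\pi_{\mathcal{F},p}(g)$ for all $g\in\Pi_{\mathcal{F},p}(X;\ell_p)$. The divergence is in the analytic core, and there your description has a genuine gap.

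The paper does \emph{not} work with $f$ and auxiliary maps $T_\lambda:Y\to\ell_p^m$. It works with the canonical embeddings $j_\mu:X\to L_p(K,\mu)$ for \emph{arbitrary} regular probabilities $\mu$ on $K=B_{X^*}$. Since $L_p(\mu)$ is an $\mathcal{L}_{p,\lambda}$-space, any finite set $\{j_\mu(x_1),\dots,j_\mu(x_m)\}$ lies in a subspace $F$ which is $\lambda$-isomorphic to $\ell_p^{\dim F}$; composing $j_\mu$ with the projection onto $F$ and this isomorphism produces a map in $\Pi_{\mathcal{F},p}(X;\ell_p^{\dim F})$ with uniformly bounded norm, so the hypothesis makes it $r$-summing, and undoing the isomorphism shows $j_\mu$ itself is $RS$-abstract $r$-summing with constant $\le cC_1\lambda$. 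Pietsch domination applied to $j_\mu$ then yields a \emph{new} probability $\hat\mu$ with $\|R(\cdot,e,b)\|_{L_p(\mu)}\le cC_1\lambda\,\|R(\cdot,e,b)\|_{L_r(\hat\mu)}$ for all $(e,b)$. This is the iterable statement: it concerns only $X$ and $K$, not $f$. Starting from the Pietsch measure $\mu_0$ of $f$, set $\mu_{n+1}=\hat\mu_n$ and $\bar\mu=\sum_{n\ge0}2^{-n-1}\mu_n$; the exponents never move, and a single application of the interpolation inequality $\|g\|_{L_r(\mu_n)}\le\|g\|_{L_1(\mu_n)}^{\theta}\|g\|_{L_p(\mu_n)}^{1-\theta}$ (with $\tfrac1r=\theta+\tfrac{1-\theta}{p}$) together with H\"older on the index $n$ gives $\|R(\cdot,e,b)\|_{L_p(\mu_0)}\le C\,\|R(\cdot,e,b)\|_{L_1(\bar\mu)}$, whence $f\in\Pi_{\mathcal{F},1}$.

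Your $T_\lambda$-step, after optimizing over $\lambda$, yields only that $f$ is $(s;r)$-summing with $s=pr/(p-r)$; this does not feed back into the hypothesis, which compares $(p;p)$- with $(r;r)$-summability, and there is no Pietsch domination for $(s;r)$-summing maps when $s\ne r$ to produce a new measure. Repeating your construction just reuses the original $(p;p)$-bound on $f$ and returns the same inequality, so nothing improves. The phrase ``pushes the right-hand exponent down to $1$'' misdescribes Maurey's scheme: the iteration is on the measures $\mu_n$, not on exponents, and the passage to $L_1$ is a single interpolation/averaging step, not a limit of shrinking exponents, so there is also no ``closedness/limiting step to reach the endpoint''. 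The missing idea is precisely the role of $j_\mu$ (and the $\mathcal{L}_{p,\lambda}$-local structure of $L_p(\mu)$) as the object to which the hypothesis can be applied repeatedly, independently of $f$.
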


Our general result (Theorem \ref{et}) also furnishes, as a simple particular
case, a recent Extrapolation Theorem for Lipschitz summing mappings, due to D.
Chen and B. Zheng \cite{RL}.

One of the pillars of the theory of absolutely summing operators is the famous
Pietsch Domination Theorem which asserts that a continuous linear operator
$u:X\rightarrow Y$ is absolutely $p$-summing if, and only if, there is a
regular probability measure $\mu$ in the Borel sets of $B_{X^{\ast}}$ (with
the weak-star topology) and a constant $C\geq0$ such that%
\[
\left\Vert u\left(  x\right)  \right\Vert \leq C\left(
{\textstyle\int\nolimits_{B_{X^{\ast}}}}
\left\vert \varphi(x)\right\vert ^{p}d\mu\left(  \varphi\right)  \right)
^{1/p}%
\]
for all $x\in X$.

Very recently, a series of works (\cite{jfa, BPRn, joed, psjmaa, adv}) on
Pietsch Domination-Factorization Theorem have shown that Pietsch Domination
Theorem in fact needs almost no linear structure and a quite general version
is valid (see Theorem \ref{tu} below) which has shown to be very useful in
different contexts (\cite{achour, CDo}).

Let $X$, $Y$ and $E$ be (arbitrary) non-void sets, $\mathcal{H}\left(
X;Y\right)  $ be a non-void family of mappings from $X$ to $Y$, $G$ be a
Banach space and $K$ be a compact Hausdorff topological space. Let
\[
R\colon K\times E\times G\longrightarrow\lbrack0,\infty)~\text{and}%
\mathrm{~}S\colon\mathcal{H}\left(  X;Y\right)  \times E\times
G\longrightarrow\lbrack0,\infty)
\]
be arbitrary mappings and $1\leq t<\infty$. According to \cite{BPRn, psjmaa} a
mapping $f\in\mathcal{H}\left(  X;Y\right)  $ is $RS$-abstract $t$-summing if
there is a constant $C\geq0$ so that%
\begin{equation}
\left(  \sum_{j=1}^{m}S(f,x_{j},b_{j})^{t}\right)  ^{\frac{1}{t}}\leq
C\sup_{\varphi\in K}\left(  \sum_{j=1}^{m}R\left(  \varphi,x_{j},b_{j}\right)
^{t}\right)  ^{\frac{1}{t}},\label{33M}%
\end{equation}
for all $x_{1},\ldots,x_{m}\in E,$ $b_{1},\ldots,b_{m}\in G$ and
$m\in\mathbb{N}$. We define
\[
{\mathcal{H}}_{RS,t}(X;Y)=\left\{  f\in\mathcal{H}\left(  X;Y\right)  :\text{
}f\text{ is }RS\text{-abstract }t\text{-summing}\right\}  .
\]

Suppose that $R$ is so that the mapping
\begin{equation}
R_{x,b}\colon K\longrightarrow\lbrack0,\infty)~\text{defined by}%
~R_{x,b}(\varphi)=R(\varphi,x,b) \label{ll9}%
\end{equation}
is continuous for every $x\in E$ and $b\in G$. The Pietsch Domination Theorem
from \cite{psjmaa} reads as follows:

\begin{theorem}
\label{tu} Suppose that $S$ is arbitrary, $R$ satisfies \textbf{(}%
\ref{ll9}\textbf{)} and let $1\leq p<\infty$. A map $h\in\mathcal{H}\left(
X;Y\right)  $ is $RS$-abstract $p$-summing if and only if there is a constant
$C\geq0$ and a Borel probability measure $\mu$ on $K$ such that%
\begin{equation}
S(h,e,b)\leq C\left\Vert R\left(  \cdot,e,b\right)  \right\Vert _{L_{p}%
(K,\mu)} \label{yr}%
\end{equation}
for all $e\in E$ and $b\in G$.\newline
\end{theorem}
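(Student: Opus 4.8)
The plan is to treat the two implications of the equivalence separately, the nontrivial one being a Hahn--Banach separation argument carried out in $C(K)$, the space of real-valued continuous functions on the compact Hausdorff space $K$. The sufficiency is a routine computation that I would dispose of first: assuming \eqref{yr}, raise it to the $p$-th power, sum over $j=1,\dots,m$ with $(e,b)=(x_j,b_j)$, interchange the (finite) sum with the integral, and bound $\sum_j R(\varphi,x_j,b_j)^p$ by its supremum over $K$, using that $\mu$ is a probability measure, to obtain
\[
\sum_{j=1}^{m}S(h,x_{j},b_{j})^{p}\le C^{p}\int_{K}\sum_{j=1}^{m}R(\varphi,x_{j},b_{j})^{p}\,d\mu(\varphi)\le C^{p}\sup_{\varphi\in K}\sum_{j=1}^{m}R(\varphi,x_{j},b_{j})^{p};
\]
taking $p$-th roots gives \eqref{33M}, so $h\in\mathcal{H}_{RS,p}(X;Y)$ with the same constant.

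\textbf{Necessity: the functional-analytic setup.} Now suppose $h$ is $RS$-abstract $p$-summing and fix a constant $C$ for which \eqref{33M} holds. Hypothesis \eqref{ll9} guarantees that for each $(e,b)\in E\times G$ the map $\varphi\mapsto R(\varphi,e,b)^{p}$ is in $C(K)$, so I may define $g_{e,b}\in C(K)$ by $g_{e,b}(\varphi)=S(h,e,b)^{p}-C^{p}R(\varphi,e,b)^{p}$, the term $S(h,e,b)^{p}$ being a constant. Let $\mathcal{M}\subset C(K)$ be the convex hull of $\{g_{e,b}:e\in E,\ b\in G\}$. The key claim is that $\inf_{\varphi\in K}g(\varphi)\le0$ for every $g\in\mathcal{M}$. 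For a convex combination $g=\sum_{j=1}^{m}\lambda_{j}g_{e_{j},b_{j}}$ with rational weights $\lambda_{j}=n_{j}/N$, I would apply \eqref{33M} to the list in which each pair $(e_{j},b_{j})$ is repeated $n_{j}$ times and divide by $N$, getting $\sum_{j}\lambda_{j}S(h,e_{j},b_{j})^{p}\le C^{p}\sup_{\varphi}\sum_{j}\lambda_{j}R(\varphi,e_{j},b_{j})^{p}$, which is exactly $\inf_{\varphi}g(\varphi)\le0$; for arbitrary weights one approximates the $\lambda_{j}$ by rationals, noting that $\varphi\mapsto\sum_{j}\lambda_{j}R(\varphi,e_{j},b_{j})^{p}$ converges uniformly on $K$ as the weights converge (a finite sum of fixed functions), so the supremum over $K$ and hence the estimate pass to the limit. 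Consequently $\mathcal{M}$ is disjoint from $\mathcal{A}=\{g\in C(K):g(\varphi)>0\ \text{for all}\ \varphi\in K\}$, which is a nonempty open convex cone, being open because $K$ is compact and containing the constant function $\mathbf{1}$.

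\textbf{Separation and conclusion.} Apply a standard geometric Hahn--Banach separation to the convex set $\mathcal{M}$ and the open convex set $\mathcal{A}$: there are a nonzero regular Borel signed measure $\mu$ on $K$ (identifying the dual of $C(K)$ with $M(K)$) and a scalar $\beta$ with $\int_{K}g\,d\mu\le\beta\le\int_{K}g'\,d\mu$ for all $g\in\mathcal{M}$ and $g'\in\mathcal{A}$ (replacing $\mu$ by $-\mu$ if needed). Since $g'+tw\in\mathcal{A}$ whenever $g'\in\mathcal{A}$, $w\in C(K)$ with $w\ge0$, and $t\ge0$, letting $t\to\infty$ forces $\int_{K}w\,d\mu\ge0$ for every such $w$, so $\mu$ is a nonnegative (nonzero) measure and $\mu(K)>0$; testing against $g'=\varepsilon\mathbf{1}$ and letting $\varepsilon\to0^{+}$ forces $\beta\le0$. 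Then, for every $(e,b)$, testing the separation inequality against $g=g_{e,b}\in\mathcal{M}$ gives $S(h,e,b)^{p}\mu(K)\le C^{p}\int_{K}R(\varphi,e,b)^{p}\,d\mu(\varphi)$; dividing by $\mu(K)$ and taking $p$-th roots yields \eqref{yr} with the Borel probability measure $\nu:=\mu/\mu(K)$.

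\textbf{Main obstacle.} The only genuinely delicate point is establishing the inequality $\inf_{\varphi}g\le0$ for \emph{all} of $\mathcal{M}$ rather than merely for rational convex combinations; this is precisely where the continuity hypothesis \eqref{ll9} enters, ensuring that the supremum over the compact set $K$ is stable under perturbation of the coefficients. Once the key claim is in place, the separation step and the verification that the separating functional is (a positive multiple of) a probability measure are routine.
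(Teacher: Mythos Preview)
The paper does not actually contain a proof of Theorem~\ref{tu}; it is quoted from \cite{psjmaa} and used as a black box in the proof of Theorem~\ref{et}. So there is no in-paper argument to compare against.

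That said, your argument is correct and is precisely the classical Pietsch separation scheme, which is also the method used in \cite{psjmaa}: one works in $C(K)$, forms the convex hull of the functions $\varphi\mapsto S(h,e,b)^{p}-C^{p}R(\varphi,e,b)^{p}$, shows it misses the open positive cone, separates by Hahn--Banach, and checks that the separating functional is (after normalization) a probability measure. Your handling of the rational-to-irrational passage for the convex coefficients is fine, since each $R(\cdot,e_{j},b_{j})^{p}$ is bounded on the compact $K$ by \eqref{ll9}, so the finite sums converge uniformly in the weights. One small remark: you describe the continuity hypothesis \eqref{ll9} as entering only at the approximation step, but its more basic role is to guarantee that the functions $g_{e,b}$ lie in $C(K)$ at all, so that the Riesz representation of $C(K)^{\ast}$ applies and the separating functional is a regular Borel measure; without this the whole setup in $C(K)$ would not make sense.
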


This general approach recovers several Pietsch Domination type theorems (see
\cite{BPRn}) and also rapidly found applications in different contexts (see
\cite{achour, CDo}). It is worth mentioning that the recent interesting
version of the Pietsch Domination Theorem for Lipschitz $(p;q;r)$-summing
operators proved in \cite[Theorem 5.4 (a)$\Rightarrow$(b)]{cha} can also be
obtained as a simple application of the general result from \cite{adv}.

In the next section we present the main result of this note, the general
Extrapolation Theorem (in the lines of the abstract setting of Theorem
\ref{tu}) which, as the general Pietsch Domination Theorem, does not need any
linear setting. In the final section we show how the general Extrapolation
Theorem can be applied.

\section{A new abstract setting}

In this section we build the environment needed for the proof of the general
Extrapolation Theorem. We keep the notation from the previous section. Also,
let $p,r$ be such that $1<r<p<\infty$. Suppose that $X$ is a topological
space, $E=X\times X$ and $K$ is a compact Hausdorff space such that $X$ is
continuously embedded in $C(K).$ We denote by $\mathcal{P}(K)$ the collection
of all regular Borel probability measures on $K.$ For any $\mu\in
\mathcal{P}(K)$, let $j_{\mu}:X\rightarrow L_{p}(\mu):=L_{p}(K,\mu)$ denote
the composition of the inclusion $X\rightarrow C(K)$ with the canonical map
$C(K)\rightarrow L_{p}(\mu).$ We will keep the notation, terminology and
assumptions from above. Also, suppose that

\textbf{(1)} For all $t\in\left\{  1,r,p\right\}  $ and all Banach spaces $Y$,
${\mathcal{H}}_{RS,t}(X;Y)$ is a vector space and the infimum of the $C$'s
satisfying (\ref{33M}) is a (complete) norm for ${\mathcal{H}}_{RS,t}(X;Y),$
denoted by $\pi_{_{RS,t}}\left(  \cdot\right)  .$

\textbf{(2)} For all Banach spaces $Y,Z,$ if $h\in{\mathcal{H}}(X;Y)$ and
$T:Y\rightarrow Z$ is a bounded linear operator, then%
\[
S(T\circ h,\left(  x,q\right)  ,b)\leq\left\Vert T\right\Vert S(h,\left(
x,q\right)  ,b).
\]

\textbf{(3)} $j_{\mu}\in\mathcal{H}\left(  X;L_{p}(\mu)\right)  $ and
\[
S(j_{\mu},\left(  x,q\right)  ,b)=\left\Vert R\left(  \cdot,\left(
x,q\right)  ,b\right)  \right\Vert _{L_{p}(K,\mu)}%
\]
\ for all $\left(  \left(  x,q\right)  ,b\right)  \in E\times G.$

\textbf{(4) }If $\{j_{\mu}(x_{1}),...,j_{\mu}(x_{m}),j_{\mu}(q_{1}),...j_{\mu
}(q_{m})\}$ is contained in a finite-dimensional subspace $F$ of $L_{p}(\mu)$
and $p_{F}:L_{p}(\mu)\rightarrow F$ denotes the canonical projection, then%
\begin{equation}
\sum_{j=1}^{m}S(j_{\mu},\left(  x_{j},q_{j}\right)  ,b_{j})^{r}=\sum_{j=1}%
^{m}S(p_{F}\circ j_{\mu},\left(  x_{j},q_{j}\right)  ,b_{j})^{r}.
\label{ee222}%
\end{equation}
Also $p_{F}\circ j_{\mu}\in{\mathcal{H}}_{RS,p}\left(  X;F\right)  $ and there
is a $C_{1}>0$ (not depending on $F$) so that
\begin{equation}
\pi_{_{RS,p}}(p_{F}\circ j_{\mu})\leq C_{1}\pi_{_{RS,p}}(j_{\mu}). \label{ee2}%
\end{equation}

\textbf{(5)} If ${\mathcal{H}}_{RS,p}(X;\ell_{p})={\mathcal{H}}_{RS,r}%
(X;\ell_{p})$ and $\pi_{{\mathcal{H}}_{RS,r}}(v)\leq c\cdot\pi_{{\mathcal{H}%
}_{RS,p}}(v)$ for all $v\in{\mathcal{H}}_{RS,p}(X;\ell_{p})$, then
\[
{\mathcal{H}}_{RS,p}(X;\ell_{p}^{m})={\mathcal{H}}_{RS,r}(X;\ell_{p}^{m})
\]
and%
\[
\pi_{{\mathcal{H}}_{RS,r}}(v)\leq c\cdot\pi_{{\mathcal{H}}_{RS,p}}(v)
\]
for all $v\in{\mathcal{H}}_{RS,p}(X;\ell_{p}^{m})$ and all positive integers
$m.$

It may seem that we have too many hypotheses but we recall that we are working
in a very abstract setting and for this reason it is natural to need to
introduce some hypotheses. Moreover, a careful examination shows that the
above hypotheses are quite natural.

\section{The general Extrapolation Theorem}

Following the assumptions and terminology of the previous sections we can
state and prove our main result:

\begin{theorem}
\label{et}Let $1<r<p<\infty.$ If
\[
{\mathcal{H}}_{RS,p}(X;\ell_{p})={\mathcal{H}}_{RS,r}(X;\ell_{p})
\]
then, for any Banach space $Y,$%
\[
{\mathcal{H}}_{RS,p}(X;Y)={\mathcal{H}}_{RS,1}(X;Y).
\]

\end{theorem}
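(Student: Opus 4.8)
The plan is to follow the classical architecture of Maurey's extrapolation proof, but to carry it out entirely in terms of the abstract quantities $S(h,(x,q),b)$ and $R(\varphi,(x,q),b)$, using the general Pietsch Domination Theorem (Theorem~\ref{tu}) as the substitute for the linear Pietsch factorization. First I would fix a Banach space $Y$ and take $h\in{\mathcal{H}}_{RS,p}(X;Y)$; the goal is to produce a probability measure $\mu\in\mathcal{P}(K)$ and a constant $C$ with $S(h,(x,q),b)\le C\,\|R(\cdot,(x,q),b)\|_{L_{1}(K,\mu)}$ for all $(x,q,b)$, because by Theorem~\ref{tu} (applied with $p=1$) this is exactly what it means for $h$ to be $RS$-abstract $1$-summing. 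By Theorem~\ref{tu} applied to $h$ at the exponent $p$, we already get a measure $\nu$ with $S(h,(x,q),b)\le C_{0}\,\|R(\cdot,(x,q),b)\|_{L_{p}(K,\nu)}$, so the whole problem is the measure-theoretic step of upgrading an $L_{p}$-domination to an $L_{1}$-domination under the hypothesis ${\mathcal{H}}_{RS,p}(X;\ell_{p})={\mathcal{H}}_{RS,r}(X;\ell_{p})$.

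The key mechanism is a Ky Fan / minimax argument. For a fixed finite set $(x_{j},q_{j},b_{j})_{j=1}^{m}$, one considers the factorization $j_{\nu}\colon X\to L_{p}(\nu)$ from hypothesis~(3), restricts its range to the finite-dimensional subspace $F$ spanned by the relevant images via the projection $p_{F}$ of hypothesis~(4), and then observes that $F\hookrightarrow\ell_{p}^{\dim F}$ isometrically. Hypothesis~(4) gives that $p_{F}\circ j_{\nu}$ is $RS$-abstract $p$-summing with $\pi_{_{RS,p}}(p_{F}\circ j_{\nu})\le C_{1}\pi_{_{RS,p}}(j_{\nu})$, and hypothesis~(5) then promotes the equality ${\mathcal{H}}_{RS,p}(X;\ell_{p})={\mathcal{H}}_{RS,r}(X;\ell_{p})$ (with its automatic norm inequality, obtained from a closed-graph/open-mapping argument using completeness in hypothesis~(1)) down to the finite-dimensional spaces $\ell_{p}^{m}$ with a uniform constant $c$. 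Hence $p_{F}\circ j_{\nu}$ is $RS$-abstract $r$-summing with a norm bound independent of $F$, and applying Theorem~\ref{tu} at the exponent $r$ to $p_{F}\circ j_{\nu}$ — together with the identity~(\ref{ee222}) relating $S(j_{\nu},\cdot)$ and $S(p_{F}\circ j_{\nu},\cdot)$ — yields, for each finite family, a probability measure $\mu$ on $K$ with $\bigl(\sum_{j}S(j_{\nu},(x_{j},q_{j}),b_{j})^{r}\bigr)^{1/r}\le C_{2}\bigl(\sum_{j}\|R(\cdot,(x_{j},q_{j}),b_{j})\|_{L_{p}(K,\mu)}^{r}\bigr)^{1/r}$ after re-expanding via hypothesis~(3). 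Combining this with the original $p$-domination of $h$ through $j_{\nu}$, and iterating (or, more cleanly, running a single Hölder interpolation between the exponents $p$ and $r$ together with the geometric decay one gets from re-feeding the improved exponent), drives the effective exponent down toward $1$.

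The cleanest way to organize the descent is the standard one: show that the hypothesis for the pair $(p,r)$ self-improves, i.e. that it also holds for a pair $(p,r')$ with $r'$ strictly closer to $1$ (by interpolating the $r$-domination just obtained against the trivial $L_{\infty}$ bound coming from continuity of $R_{x,b}$ in~(\ref{ll9})), and then iterate to push the exponent to $1$; at the limit one assembles the countably many measures into a single $\mu$ by a convex-combination (or weak-$*$ compactness of $\mathcal{P}(K)$) argument and passes to $L_{1}(K,\mu)$ domination, which by Theorem~\ref{tu} is precisely $h\in{\mathcal{H}}_{RS,1}(X;Y)$. The reverse inclusion ${\mathcal{H}}_{RS,1}(X;Y)\subseteq{\mathcal{H}}_{RS,p}(X;Y)$ is immediate from~(\ref{33M}) and the monotonicity $\|\cdot\|_{L_{1}}\le\|\cdot\|_{L_{p}}$ for probability measures.

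The main obstacle, as in all extrapolation arguments, is the finite-dimensional reduction and the minimax step: one must make sure that the passage to the finite-dimensional subspace $F$ really does land inside an $\ell_{p}^{m}$ to which hypothesis~(5) applies, that the constant $C_{1}$ in~(\ref{ee2}) is genuinely independent of $F$ so the resulting domination constant does not blow up, and that the Ky Fan minimax theorem is applicable — which is exactly why~(\ref{ll9}) (continuity of $R_{x,b}$ on the compact $K$) and the convexity of $\mathcal{P}(K)$ are imposed. I expect essentially all the work to be in verifying that the convexity/compactness hypotheses let the per-finite-family measures be amalgamated into one measure with a uniform constant; the exponent arithmetic afterwards is routine Hölder bookkeeping.
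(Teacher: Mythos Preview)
Your overall architecture is Maurey's, as in the paper, but two of your intermediate steps are misdescribed in ways that matter.

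First, the finite-dimensional reduction. A finite-dimensional subspace $F$ of $L_{p}(\mu)$ does \emph{not} embed isometrically into $\ell_{p}^{\dim F}$; the paper uses that $L_{p}(\mu)$ is an $\mathcal{L}_{p,\lambda}$ space for every $\lambda>1$, so one obtains an isomorphism $T:F\to\ell_{p}^{\dim F}$ with $\|T\|\|T^{-1}\|<\lambda$, and hypothesis~(2) is needed on both $T$ and $T^{-1}$ to move back and forth. Also, you do not need a ``per-finite-family'' measure or a Ky Fan step here: once the chain of inequalities shows that $j_{\mu}$ satisfies the defining inequality~(\ref{33M}) at exponent $r$ for every finite family with a uniform constant, Theorem~\ref{tu} applied once gives a single $\hat\mu\in\mathcal{P}(K)$ with
\[
\|R(\cdot,(x,q),b)\|_{L_{p}(K,\mu)}=S(j_{\mu},(x,q),b)\le c\,C_{1}\,\lambda\,\|R(\cdot,(x,q),b)\|_{L_{r}(K,\hat\mu)}
\]
for all $(x,q,b)$ (your displayed inequality has $L_{p}$ on the right where it should be $L_{r}$).

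Second, and this is the real gap, your descent scheme is not the right one. Interpolating an $r$-domination against an $L_{\infty}$ bound cannot push the exponent below $r$: H\"older between $L_{r}$ and $L_{\infty}$ only produces $L_{s}$ norms with $s\ge r$. The paper does not iterate on the exponent at all. It iterates on the \emph{measure}: the map $\mu\mapsto\hat\mu$ above is applied repeatedly starting from the $\mu_{0}$ coming from the $p$-domination of $h$, producing $\mu_{n+1}=\widehat{\mu_{n}}$, and one sets $\bar\mu=\sum_{n\ge0}2^{-n-1}\mu_{n}$. The interpolation used is Littlewood's inequality placing $L_{r}$ between $L_{1}$ and $L_{p}$, i.e.\ $\|f\|_{L_{r}(\mu_{n})}\le\|f\|_{L_{1}(\mu_{n})}^{\theta}\|f\|_{L_{p}(\mu_{n})}^{1-\theta}$ with $\tfrac{1}{r}=\theta+\tfrac{1-\theta}{p}$; combining this with the $\mu\mapsto\hat\mu$ estimate and H\"older for the sum over $n$ collapses $\sum 2^{-n-1}\|R\|_{L_{p}(\mu_{n})}$ against $\|R\|_{L_{1}(\bar\mu)}$ in one stroke, yielding $\|R\|_{L_{p}(\mu_{0})}\le C\|R\|_{L_{1}(\bar\mu)}$. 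No exponent-lowering iteration, no $L_{\infty}$, and no separate amalgamation step is needed.
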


\begin{proof}
From Theorem \ref{tu}, using the monotonicity of the $L_{p}$ norms we have
\[
{\mathcal{H}}_{RS,1}(X;Y)\subset{\mathcal{H}}_{RS,p}(X;Y).
\]

For the converse inclusion, it suffices to prove that, regardless of the
Banach space $Y,$ there is $C>0$ such that for each $h\in{\mathcal{H}}%
_{RS,p}(X;Y),$ we have
\[
\pi_{_{RS,1}}(h)\leq C\cdot\pi_{_{RS,p}}(h).
\]

Since ${\mathcal{H}}_{RS,p}(X;\ell_{p})={\mathcal{H}}_{RS,r}(X;\ell_{p}),$ it
follows from the Open Mapping Theorem and (1) that there is $c>0$ such that%
\begin{equation}
\pi_{{\mathcal{H}}_{RS,r}}(v)\leq c\cdot\pi_{{\mathcal{H}}_{RS,p}}(v),\text{
}\label{gf}%
\end{equation}
for all $v\in{\mathcal{H}}_{RS,p}(X;\ell_{p}).$ Let $\mu\in\mathcal{P}(K)$;
since $L_{p}\left(  K,\mu\right)  $ is an ${\mathcal{L}}_{p,\lambda}$ space
for all $\lambda>1$ (see \cite[Theorem 3.2]{djt}), we can assert that for each
$\left(  x_{i}\right)  _{1\leq i\leq m},$ $\left(  q_{i}\right)  _{1\leq i\leq
m}$ in $X$, the subspace of $L_{p}\left(  K,\mu\right)  $ generated by
$\left\{  j_{\mu}\left(  x_{1}\right)  ,...,j_{\mu}\left(  x_{m}\right)
,j_{\mu}\left(  q_{1}\right)  ,...,j_{\mu}\left(  q_{m}\right)  \right\}  $
embeds $\lambda$-isomorphically into $\ell_{p}.$ More precisely,
\[
\text{span}\left\{  j_{\mu}\left(  x_{1}\right)  ,...,j_{\mu}\left(
x_{m}\right)  ,j_{\mu}\left(  q_{1}\right)  ,...,j_{\mu}\left(  q_{m}\right)
\right\}
\]
is contained in a subspace $F$ of $L_{p}\left(  K,\mu\right)  $ for which
there is an isomorphism $T:F\rightarrow\ell_{p}^{\dim F}$ with
\begin{equation}
\left\Vert T\right\Vert \left\Vert T^{-1}\right\Vert <\lambda.\label{ee1}%
\end{equation}

From (3) $j_{\mu}$ is $RS$-abstract $p$-summing and
\begin{equation}
\pi_{_{RS,p}}(j_{\mu})\leq1. \label{umum}%
\end{equation}
Hence, from (\ref{umum}) and (\ref{ee2}) we have
\begin{equation}
\pi_{_{RS,p}}(p_{F}\circ j_{\mu})\leq C_{1}\text{ } \label{kkkj}%
\end{equation}
and, from (2), it follows that $T\circ p_{F}\circ j_{\mu}\in{\mathcal{H}%
}_{RS,p}(X;\ell_{p}^{\dim F}).$ Finally, from (5) and (\ref{gf}) we can assert
that
\begin{equation}
T\circ p_{F}\circ j_{\mu}\in{\mathcal{H}}_{RS,r}(X;\ell_{p}^{\dim F}).
\label{umww}%
\end{equation}
and
\begin{equation}
\pi_{{\mathcal{H}}_{RS,r}}(T\circ p_{F}\circ j_{\mu})\leq c\cdot
\pi_{{\mathcal{H}}_{RS,p}}(T\circ p_{F}\circ j_{\mu}). \label{doisww}%
\end{equation}

We thus have
\begin{align*}
&  \left(  \sum_{j=1}^{m}S(j_{\mu},\left(  x_{j},q_{j}\right)  ,b_{j}%
)^{r}\right)  ^{1/r}\overset{(\ref{ee222})}{=}\left(  \sum_{j=1}^{m}%
S(T^{-1}\circ T\circ p_{F}\circ j_{\mu},\left(  x_{j},q_{j}\right)
,b_{j})^{r}\right)  ^{1/r}\\
&  \overset{(2)}{\leq}\left\Vert T^{-1}\right\Vert \left(  \sum_{j=1}%
^{m}S(T\circ p_{F}\circ j_{\mu},\left(  x_{j},q_{j}\right)  ,b_{j}%
)^{r}\right)  ^{1/r}\\
&  \overset{(\ref{umww})}{\leq}\left\Vert T^{-1}\right\Vert \pi_{{\mathcal{H}%
}_{RS,r}}(T\circ p_{F}\circ j_{\mu})\sup_{\varphi\in K}\left(  \sum_{j=1}%
^{m}R\left(  \varphi,\left(  x_{j},q_{j}\right)  ,b_{j}\right)  ^{r}\right)
^{1/r}\\
&  \overset{(\ref{doisww})}{\leq}c\cdot\left\Vert T^{-1}\right\Vert
\pi_{{\mathcal{H}}_{RS,p}}(T\circ p_{F}\circ j_{\mu})\sup_{\varphi\in
K}\left(  \sum_{j=1}^{m}R\left(  \varphi,\left(  x_{j},q_{j}\right)
,b_{j}\right)  ^{r}\right)  ^{1/r}\\
&  \overset{(2)}{\leq}c\cdot\left\Vert T^{-1}\right\Vert \left\Vert
T\right\Vert \pi_{{\mathcal{H}}_{RS,p}}(p_{F}\circ j_{\mu})\sup_{\varphi\in
K}\left(  \sum_{j=1}^{m}R\left(  \varphi,\left(  x_{j},q_{j}\right)
,b_{j}\right)  ^{r}\right)  ^{1/r}\\
&  \overset{(\ref{ee1})\text{ and }(\text{\ref{kkkj}})}{\leq}c\cdot
\lambda\cdot C_{1}\sup_{\varphi\in K}\left(  \sum_{j=1}^{m}R\left(
\varphi,\left(  x_{j},q_{j}\right)  ,b_{j}\right)  ^{r}\right)  ^{1/r}.
\end{align*}
So
\[
\pi_{{\mathcal{H}}_{RS,r}}(j_{\mu})\leq c\cdot C_{1}\cdot\lambda
\]
and, from Theorem \ref{tu}, there is $\hat{\mu}\in\mathcal{P}(K)$
\begin{equation}
S(j_{\mu},\left(  x,q\right)  ,b)\leq c\cdot C_{1}\cdot\lambda\left(
\int\limits_{K}R\left(  \varphi,\left(  x,q\right)  ,b\right)  ^{r}d\hat{\mu
}\left(  \varphi\right)  \right)  ^{1/r}, \label{mju}%
\end{equation}
for all $\left(  \left(  x,q\right)  ,b\right)  \in E\times G.$ Thus,%
\begin{equation}
\left\Vert R\left(  \cdot,\left(  x,q\right)  ,b\right)  \right\Vert
_{L_{p}(K,\mu)}\overset{(3)}{=}S(j_{\mu},\left(  x,q\right)  ,b)\overset
{(\ref{mju})}{\leq}c\cdot C_{1}\cdot\lambda\left\Vert R\left(  \cdot,\left(
x,q\right)  ,b\right)  \right\Vert _{L_{r}(K,\hat{\mu})}, \label{mi}%
\end{equation}
for all $\left(  \left(  x,q\right)  ,b\right)  \in E\times G.$

Now let $h\in{\mathcal{H}}_{RS,p}(X;Y).$ From Theorem \ref{tu} there is a
$\mu_{0}\in\mathcal{P}(K)$ such that
\[
S(h,\left(  x,q\right)  ,b)\leq\pi_{_{RS,p}}(h)\left(  \int\limits_{K}R\left(
\varphi,\left(  x,q\right)  ,b\right)  ^{p}d\mu_{0}\left(  \varphi\right)
\right)  ^{1/p}=\pi_{_{RS,p}}(h)\left\Vert R\left(  \cdot,\left(  x,q\right)
,b\right)  \right\Vert _{L_{p}(K,\mu_{0})}%
\]
for all $\left(  \left(  x,q\right)  ,b\right)  \in E\times G.$ Now it is
enough to show that%
\[
\left\Vert R\left(  \cdot,\left(  x,q\right)  ,b\right)  \right\Vert
_{L_{p}(K,\mu_{0})}\leq C\left\Vert R\left(  \cdot,\left(  x,q\right)
,b\right)  \right\Vert _{L_{1}(K,\bar{\mu})}%
\]
for some $\bar{\mu}\in\mathcal{P}(K)$ and some constant $C$ depending only on
$X$; the rest of proof follows the lines of Maurey's original argument.
Starting with $\mu_{0},$ define $\left(  \mu_{n}\right)  _{n=0}^{\infty}$ in
$\mathcal{P}(K)$ by setting $\mu_{n+1}=\hat{\mu}_{n},$ $n=0,1,....$ and
define
\[
\bar{\mu}=\sum_{n=0}^{\infty}2^{-n-1}\mu_{n}.
\]
Then $\bar{\mu}\in\mathcal{P}(K)$ and, since $1<r<p,$ there exists a
$\theta\in\left(  0,1\right)  $ so that%
\[
\frac{1}{r}=\theta+\frac{1-\theta}{p}.
\]
Using Littlewood's Inequality (see \cite[p. 55]{Garling}), we get%
\begin{align}
\left\Vert R\left(  \cdot,\left(  x,q\right)  ,b\right)  \right\Vert
_{L_{r}(K,\mu_{n})} &  =\left(  \int\limits_{K}R\left(  \varphi,\left(
x,q\right)  ,b\right)  ^{r}d\mu_{n}\left(  \varphi\right)  \right)
^{1/r}\label{dh}\\
&  \leq\left(  \int\limits_{K}R\left(  \varphi,\left(  x,q\right)  ,b\right)
d\mu_{n}\left(  \varphi\right)  \right)  ^{\theta}\left(  \int\limits_{K}%
R\left(  \varphi,\left(  x,q\right)  ,b\right)  ^{p}d\mu_{n}\left(
\varphi\right)  \right)  ^{\frac{1-\theta}{p}}\nonumber\\
&  =\left\Vert R\left(  \cdot,\left(  x,q\right)  ,b\right)  \right\Vert
_{L_{1}(K,\mu_{n})}^{\theta}\left\Vert R\left(  \cdot,\left(  x,q\right)
,b\right)  \right\Vert _{L_{p}(K,\mu_{n})}^{1-\theta}.\nonumber
\end{align}
Then,%
\begin{align*}
&  \sum_{n=0}^{\infty}2^{-n-1}\left\Vert R\left(  \cdot,\left(  x,q\right)
,b\right)  \right\Vert _{L_{p}(K,\mu_{n})}\overset{(\ref{mi})}{\leq}c\cdot
C_{1}\cdot\lambda\sum_{n=0}^{\infty}2^{-n-1}\left\Vert R\left(  \cdot,\left(
x,q\right)  ,b\right)  \right\Vert _{L_{r}(K,\hat{\mu}_{n})}\\
&  =c\cdot C_{1}\cdot\lambda\sum_{n=0}^{\infty}2^{-n-1}\left\Vert R\left(
\cdot,\left(  x,q\right)  ,b\right)  \right\Vert _{L_{r}(K,\mu_{n+1})}\\
&  \overset{(\ref{dh})}{\leq}c\cdot C_{1}\cdot\lambda\sum_{n=0}^{\infty
}2^{-n-1}\left\Vert R\left(  \cdot,\left(  x,q\right)  ,b\right)  \right\Vert
_{L_{1}(K,\mu_{n+1})}^{\theta}\left\Vert R\left(  \cdot,\left(  x,q\right)
,b\right)  \right\Vert _{L_{p}(K,\mu_{n+1})}^{1-\theta}\\
&  =c\cdot C_{1}\cdot\lambda\sum_{n=0}^{\infty}\left(  2^{-n-1}\right)
^{\theta}\left\Vert R\left(  \cdot,\left(  x,q\right)  ,b\right)  \right\Vert
_{L_{1}(K,\mu_{n+1})}^{\theta}\left(  2^{-n-1}\right)  ^{1-\theta}\left\Vert
R\left(  \cdot,\left(  x,q\right)  ,b\right)  \right\Vert _{L_{p}(K,\mu
_{n+1})}^{1-\theta}\\
&  =c\cdot C_{1}\cdot\lambda\sum_{n=0}^{\infty}\left(  2^{-n-1}\left\Vert
R\left(  \cdot,\left(  x,q\right)  ,b\right)  \right\Vert _{L_{1}(K,\mu
_{n+1})}\right)  ^{\theta}\left(  2^{-n-1}\left\Vert R\left(  \cdot,\left(
x,q\right)  ,b\right)  \right\Vert _{L_{p}(K,\mu_{n+1})}\right)  ^{1-\theta}\\
&  \overset{\text{(*)}}{\leq}c\cdot C_{1}\cdot\lambda\left(  \sum
_{n=0}^{\infty}2^{-n-1}\left\Vert R\left(  \cdot,\left(  x,q\right)
,b\right)  \right\Vert _{L_{1}(K,\mu_{n+1})}\right)  ^{\theta}\left(
\sum_{n=0}^{\infty}2^{-n-1}\left\Vert R\left(  \cdot,\left(  x,q\right)
,b\right)  \right\Vert _{L_{p}(K,\mu_{n+1})}\right)  ^{1-\theta}\\
&  \leq c\cdot C_{1}\cdot\lambda\left(  \sum_{n=0}^{\infty}2^{-n-1}\left\Vert
R\left(  \cdot,\left(  x,q\right)  ,b\right)  \right\Vert _{L_{1}(K,\mu
_{n+1})}\right)  ^{\theta}\left(  2\sum_{n=0}^{\infty}2^{-n-1}\left\Vert
R\left(  \cdot,\left(  x,q\right)  ,b\right)  \right\Vert _{L_{p}(K,\mu_{n}%
)}\right)  ^{1-\theta},
\end{align*}
where in (*) we used Holder's Inequality. Hence
\begin{align*}
\sum_{n=0}^{\infty}2^{-n-1}\left\Vert R\left(  \cdot,\left(  x,q\right)
,b\right)  \right\Vert _{L_{p}(K,\mu_{n})} &  \leq\left(  c\cdot C_{1}%
\cdot\lambda\right)  ^{1/\theta}2^{\frac{1-\theta}{\theta}}\left(  \sum
_{n=0}^{\infty}2^{-n-1}\left\Vert R\left(  \cdot,\left(  x,q\right)
,b\right)  \right\Vert _{L_{1}(K,\mu_{n+1})}\right)  \\
&  \leq\left(  c\cdot C_{1}\cdot\lambda\right)  ^{1/\theta}2^{\frac{1-\theta
}{\theta}}2\left(  \sum_{n=0}^{\infty}2^{-n-1}\left\Vert R\left(
\cdot,\left(  x,q\right)  ,b\right)  \right\Vert _{L_{1}(K,\mu_{n})}\right)  .
\end{align*}
Note that
\begin{align*}
\left\Vert R\left(  \cdot,\left(  x,q\right)  ,b\right)  \right\Vert
_{L_{1}(K,\bar{\mu})} &  =\int\limits_{K}R\left(  \varphi,\left(  x,q\right)
,b\right)  d\left[  \sum_{n=0}^{\infty}2^{-n-1}\mu_{n}\right]  \left(
\varphi\right)  \\
&  =\sum_{n=0}^{\infty}2^{-n-1}\int\limits_{K}R\left(  \varphi,\left(
x,q\right)  ,b\right)  d\mu_{n}\left(  \varphi\right)  \\
&  =\sum_{n=0}^{\infty}2^{-n-1}\left\Vert R\left(  \cdot,\left(  x,q\right)
,b\right)  \right\Vert _{L_{1}(K,\mu_{n})}.
\end{align*}
So,
\[
\sum_{n=0}^{\infty}2^{-n-1}\left\Vert R\left(  \cdot,\left(  x,q\right)
,b\right)  \right\Vert _{L_{p}(K,\mu_{n})}\leq\left(  2c\cdot C_{1}%
\cdot\lambda\right)  ^{1/\theta}\left\Vert R\left(  \cdot,\left(  x,q\right)
,b\right)  \right\Vert _{L_{1}(K,\bar{\mu})}.
\]
In particular,
\[
2^{-1}\left\Vert R\left(  \cdot,\left(  x,q\right)  ,b\right)  \right\Vert
_{L_{p}(K,\mu_{0})}\leq\left(  2c\cdot C_{1}\cdot\lambda\right)  ^{1/\theta
}\left\Vert R\left(  \cdot,\left(  x,q\right)  ,b\right)  \right\Vert
_{L_{1}(K,\bar{\mu})}.
\]
Thus $C=2\left(  2c\cdot C_{1}\cdot\lambda\right)
^{1/\theta}$ is the desired constant.
\end{proof}

\section{Recovering the previous Extrapolation Theorems}

\subsection{The Extrapolation Theorem for absolutely $p$-summing linear
operators}

Note that a continuous linear operator $T:X\rightarrow Y$ is absolutely
$p$-summing if and only if it is $RS$-abstract $p$-summing with
\[
E=X\times X\text{ and }G=\mathbb{R}%
\]
and $K=B_{X^{\ast}}$, with the weak star topology, ${\mathcal{H}%
}(X;Y)={\mathcal{L}}(X;Y)$ and $R$ and $S$ are defined by:
\[
R\colon B_{X^{\ast}}\times(X\times X)\times\mathbb{R}\longrightarrow
\lbrack0,\infty)~,~R(\varphi,(x_{1},x_{2}),\lambda)=|\varphi(x_{1})|
\]%
\[
S\colon{\mathcal{L}}(X;Y)\times(X\times X)\times\mathbb{R}\longrightarrow
\lbrack0,\infty)~,~S(T,(x_{1},x_{2}),\lambda)=\left\Vert T(x_{1})\right\Vert
.
\]
Since the hypotheses of Theorem \ref{et} are straightforwardly satisfied, we
recover the classical Extrapolation Theorem (Theorem \ref{vv}).

\subsection{The Extrapolation Theorem for Lipschitz $p$-summing maps}

If $X=(X,d_{X})$ and $Y=(Y,d_{Y})$ are metric spaces, according to Farmer and
Johnson \cite{FaJo}, a map $T\colon X\longrightarrow Y$ is Lipschitz
$p$-summing (notation $T\in\Pi_{p}^{L}(X;Y)$) if there is a constant $C\geq0$
such that, for all natural $n$ and $x_{1},\ldots,x_{n},y_{1},\ldots,y_{n}\in
X$,
\[
\sum_{i=1}^{n}d_{Y}(T(x_{i}),T(y_{i}))^{p}\leq C^{p}\sup_{f\in B_{X^{\#}}}%
\sum_{i=1}^{n}|f(x_{i})-f(y_{i})|^{p},
\]
where $B_{X^{\#}}$ is the unit ball of the Lipschitz dual $X^{\#}$ of $X$. The infimum of all such $C$ is denoted by $\pi
_{p}^{L}$. Note that $T$ is Lipschitz $p$-summing if and only if it is $RS$-abstract
$p$-summing with
\[
E=X\times X\text{ and }G=\mathbb{R}%
\]
and $K=B_{X^{\#}}$, which is a compact Hausdorff space in the topology of
pointwise convergence on $Y$, $\mathcal{H}\left(  X;Y\right)  $ is the set of
all maps from $X$ to $Y$ and $R$ and $S$ are defined by:
\[
R\colon B_{X^{\#}}\times(X\times X)\times\mathbb{R}\longrightarrow
\lbrack0,\infty)~,~R(f,(x,y),\lambda)=|f(x)-f(y)|
\]%
\[
S\colon{\mathcal{H}}\left(  X;Y\right)  \times(X\times X)\times\mathbb{R}%
\longrightarrow\lbrack0,\infty)~,~S(T,(x,y),\lambda)=d_{Y}(T(x),T(y)).
\]
As a consequence of Theorem \ref{tu} we have:

\begin{theorem}
[Farmer-Johnson]The following are equivalent for a mapping $T\colon
X\longrightarrow Y$ between metric spaces:

(i) $T$ is Lipschitz $p$-summing.

(ii) There is a probability $\mu$ on $B_{X^{\#}}$ and a constant $C\geq0$ such
that%
\[
d_{Y}\left(  Tx,Ty\right)  \leq C\left(
{\textstyle\int\nolimits_{B_{X^{\#}}}}
|f(x)-f(y)|^{p}d\mu(f)\right)  ^{1/p}.
\]

\end{theorem}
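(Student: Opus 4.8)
The plan is to observe that the statement is merely the specialization of the abstract Pietsch Domination Theorem (\thmref{tu}) to the data displayed just above it, so that the only genuine work is (a) to check that these data meet the hypotheses of \thmref{tu}, and (b) to verify that the two inequalities occurring there collapse, after unwinding the definitions of $R$ and $S$, to the defining inequality of Lipschitz $p$-summing maps and to the domination inequality in (ii), respectively.

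First I would fix the framework: let $X=(X,d_X)$, $Y=(Y,d_Y)$ be the given metric spaces, put $E=X\times X$ and $G=\mathbb{R}$ (the scalar slot is a dummy which is ignored throughout), let $\mathcal{H}(X;Y)$ be the set of all maps $X\to Y$, and let $K=B_{X^{\#}}$ carry the topology of pointwise convergence. The one genuinely topological point to record here is that $K$ is then a compact Hausdorff space: after the usual normalization at a base point, $B_{X^{\#}}$ is a pointwise-bounded, uniformly equi-Lipschitz family of real functions on $X$, hence a closed subset of a product of compact intervals, so its compactness follows from Tychonoff's theorem, exactly as in \cite{FaJo}; Hausdorffness is automatic for a pointwise-convergence topology. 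Next, define $R\colon K\times E\times G\to[0,\infty)$ by $R(f,(x,y),\lambda)=|f(x)-f(y)|$ and $S\colon\mathcal{H}(X;Y)\times E\times G\to[0,\infty)$ by $S(T,(x,y),\lambda)=d_Y(Tx,Ty)$, and check the continuity requirement \eqref{ll9}: for fixed $(x,y)\in E$ and $\lambda\in G$, the map $f\mapsto|f(x)-f(y)|$ is the composition of the two continuous evaluation maps $f\mapsto f(x)$ and $f\mapsto f(y)$ with subtraction and absolute value, hence continuous on $K$. With these choices, inequality \eqref{33M} for the pair $(R,S)$ and exponent $t=p$ reads, for all $n$ and all $x_1,\dots,x_n,y_1,\dots,y_n\in X$,
\[
\Bigl(\sum_{i=1}^n d_Y(Tx_i,Ty_i)^p\Bigr)^{1/p}\le C\sup_{f\in B_{X^{\#}}}\Bigl(\sum_{i=1}^n |f(x_i)-f(y_i)|^p\Bigr)^{1/p},
\]
which is precisely the definition of Lipschitz $p$-summing (and the least admissible $C$ is $\pi_p^L(T)$). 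Hence $T\in\mathcal{H}_{RS,p}(X;Y)$ if and only if condition (i) holds.

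Finally I would invoke \thmref{tu}: since $S$ is arbitrary and $R$ satisfies \eqref{ll9}, the map $T$ is $RS$-abstract $p$-summing if and only if there exist $C\ge0$ and a Borel probability measure $\mu$ on $K=B_{X^{\#}}$ with $S(T,(x,y),\lambda)\le C\,\|R(\cdot,(x,y),\lambda)\|_{L_p(K,\mu)}$ for all $(x,y)\in E$ and $\lambda\in G$; unwinding the definitions of $R$ and $S$, this says exactly
\[
d_Y(Tx,Ty)\le C\Bigl(\int_{B_{X^{\#}}}|f(x)-f(y)|^p\,d\mu(f)\Bigr)^{1/p}\qquad\text{for all }x,y\in X,
\]
i.e. condition (ii). Combining the last two paragraphs yields the equivalence (i) $\Leftrightarrow$ (ii). I do not expect any obstacle in this argument beyond the verification that $B_{X^{\#}}$ is compact Hausdorff in the pointwise topology; everything else is a direct transcription of \thmref{tu}.
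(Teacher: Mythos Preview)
Your proposal is correct and takes essentially the same approach as the paper: the paper simply records the choices of $E$, $G$, $K$, $\mathcal{H}(X;Y)$, $R$, $S$ and then states the theorem with the preamble ``As a consequence of Theorem~\ref{tu} we have,'' leaving the routine verifications you carry out to the reader. Your write-up is in fact more detailed than the paper's, since you explicitly check the compactness of $B_{X^{\#}}$ and the continuity condition \eqref{ll9}, but the method is identical.
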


It is well-known that if $Y$ is Banach space then $(\Pi_{p}^{L}(X;Y),\pi
_{p}^{L})$ is also a Banach space and $\Pi_{p}^{L}$ has the ideal property.
Moreover%
\begin{align*}
S(j_{\mu},(x,y),\lambda)  &  =\left\Vert j_{\mu}(x)-j_{\mu}(y)\right\Vert
_{L_{p}(B_{X^{\#}},\mu)}=\left(
{\textstyle\int\nolimits_{B_{X^{\#}}}}
|j_{\mu}(x)f-j_{\mu}(x)f|^{p}d\mu(f)\right)  ^{1/p}\\
&  =\left(
{\textstyle\int\nolimits_{B_{X^{\#}}}}
|f(x)-f(y)|^{p}d\mu(f)\right)  ^{1/p}=\left\Vert R(\cdot,(x,y),\lambda
)\right\Vert _{L_{p}(B_{X^{\#}},\mu)}.
\end{align*}
So, from \ Theorem \ref{et} we recover the Extrapolation Theorem due to D. Chen and B. Zheng  \cite{RL}:

\begin{theorem}
[Extrapolation Theorem for Lipschitz $p$-summing operators]Let $1<r<p<\infty$
and let $X$ be a metric space. If
\[
\Pi_{p}^{L}(X;\ell_{p})=\Pi_{r}^{L}(X;\ell_{p}),
\]
then, for any Banach space $Y,$%
\[
\Pi_{p}^{L}(X;Y)=\Pi_{1}^{L}(X;Y).
\]

\end{theorem}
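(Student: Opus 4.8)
The plan is to deduce the statement directly from the general Extrapolation Theorem (Theorem \ref{et}), by checking that the Lipschitz $p$-summing setting is precisely an instance of the abstract $RS$-framework. Concretely, I would take $E=X\times X$, $G=\mathbb{R}$, $K=B_{X^{\#}}$ (compact Hausdorff in the topology of pointwise convergence on $X$), $\mathcal{H}(X;Y)$ the family of all maps $X\to Y$, and $R,S$ as displayed above, so that by definition $\mathcal{H}_{RS,t}(X;Y)=\Pi_{t}^{L}(X;Y)$ and $\pi_{RS,t}=\pi_{t}^{L}$ for each $t\in\{1,r,p\}$. Since $R_{(x,y),\lambda}(f)=|f(x)-f(y)|$ is a difference of the pointwise-evaluation functionals $f\mapsto f(x)$ and $f\mapsto f(y)$, it is continuous on $K$, so condition (\ref{ll9}) holds and Theorem \ref{tu} applies (this already yields the Farmer--Johnson domination theorem). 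It then remains only to verify hypotheses \textbf{(1)}--\textbf{(5)} of Section 2; once these are in place, Theorem \ref{et} gives $\Pi_{p}^{L}(X;Y)=\Pi_{1}^{L}(X;Y)$ for every Banach space $Y$.

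Next I would dispatch the hypotheses that follow from routine estimates. Hypothesis \textbf{(1)} is the well-known fact that, for $Y$ Banach, $(\Pi_{t}^{L}(X;Y),\pi_{t}^{L})$ is a Banach space and $\Pi_{t}^{L}$ has the ideal property: subadditivity of $S$ in the map variable (from the triangle inequality in $Y$) makes $\pi_{t}^{L}$ a norm, and completeness is the standard Cauchy-sequence argument. Hypothesis \textbf{(2)} is immediate from $\|T(h(x))-T(h(y))\|\leq\|T\|\,\|h(x)-h(y)\|$ for a bounded linear $T$; that is, it is exactly the ideal inequality. Hypothesis \textbf{(3)} is the computation already recorded before the statement: embedding $X$ isometrically into $C(B_{X^{\#}})$ via $x\mapsto(f\mapsto f(x))$ and composing with $C(K)\to L_{p}(\mu)$, one obtains $S(j_{\mu},(x,y),\lambda)=\|j_{\mu}(x)-j_{\mu}(y)\|_{L_{p}(\mu)}=\|R(\cdot,(x,y),\lambda)\|_{L_{p}(K,\mu)}$.

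The substantive points are hypotheses \textbf{(4)} and \textbf{(5)}. For \textbf{(5)} I would use that $\ell_{p}^{m}$ sits $1$-complementedly in $\ell_{p}$: writing $\iota\colon\ell_{p}^{m}\hookrightarrow\ell_{p}$ for the inclusion and $P\colon\ell_{p}\to\ell_{p}^{m}$ for the coordinate projection, one has $\|\iota\|=\|P\|=1$ and $P\circ\iota=\mathrm{id}$. Given $v\in\Pi_{p}^{L}(X;\ell_{p}^{m})$, hypothesis \textbf{(2)} makes $\iota\circ v\in\Pi_{p}^{L}(X;\ell_{p})=\Pi_{r}^{L}(X;\ell_{p})$, whence $v=P\circ(\iota\circ v)$ is Lipschitz $r$-summing and the ideal inequalities chain to $\pi_{r}^{L}(v)\leq\pi_{r}^{L}(\iota\circ v)\leq c\,\pi_{p}^{L}(\iota\circ v)\leq c\,\pi_{p}^{L}(v)$, uniformly in $m$; the reverse inclusion is automatic from monotonicity of the summing norms. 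Hypothesis \textbf{(4)} is where the real work lies and is the step I expect to be the main obstacle: one must realise the finitely many points $j_{\mu}(x_{j}),j_{\mu}(q_{j})$ inside a finite-dimensional subspace $F\subset L_{p}(\mu)$ that is the range of a projection $p_{F}$ whose norm is bounded by a constant $C_{1}$ independent of $F$. This is furnished by the $\mathcal{L}_{p,\lambda}$ structure of $L_{p}(\mu)$ (as invoked in the proof of Theorem \ref{et} via \cite[Theorem 3.2]{djt}). Equality (\ref{ee222}) is then immediate because $p_{F}$ fixes each $j_{\mu}(x_{j}),j_{\mu}(q_{j})$, so the summands agree term by term, while (\ref{ee2}) follows from hypothesis \textbf{(2)} applied to $T=p_{F}$, giving $\pi_{RS,p}(p_{F}\circ j_{\mu})\leq\|p_{F}\|\,\pi_{RS,p}(j_{\mu})\leq C_{1}\,\pi_{RS,p}(j_{\mu})$.

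With \textbf{(1)}--\textbf{(5)} verified and (\ref{ll9}) ensuring Theorem \ref{tu}, the hypothesis $\Pi_{p}^{L}(X;\ell_{p})=\Pi_{r}^{L}(X;\ell_{p})$ reads $\mathcal{H}_{RS,p}(X;\ell_{p})=\mathcal{H}_{RS,r}(X;\ell_{p})$, and Theorem \ref{et} yields $\mathcal{H}_{RS,p}(X;Y)=\mathcal{H}_{RS,1}(X;Y)$, i.e. $\Pi_{p}^{L}(X;Y)=\Pi_{1}^{L}(X;Y)$, which is the assertion.
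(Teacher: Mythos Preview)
Your proposal is correct and follows exactly the paper's approach: instantiate the abstract $RS$-framework for the Lipschitz setting (with $E=X\times X$, $G=\mathbb{R}$, $K=B_{X^{\#}}$, and the displayed $R,S$) and invoke Theorem \ref{et}. The paper is terser --- it explicitly records only hypotheses \textbf{(1)}--\textbf{(3)} and the computation $S(j_{\mu},(x,y),\lambda)=\|R(\cdot,(x,y),\lambda)\|_{L_{p}(K,\mu)}$ before appealing to Theorem \ref{et}, leaving \textbf{(4)} and \textbf{(5)} implicit --- whereas you spell these out, but the strategy is identical.
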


A similar argument also shows that Theorem \ref{intr} holds, since conditions
(1)-(5) are easily satisfied.

\end{document}